\documentclass[12pt,a4paper]{article}

\usepackage[utf8]{inputenc}
\usepackage{amssymb, amsthm, amsmath}
\usepackage[english]{babel}
\usepackage[T1]{fontenc}
\usepackage{graphicx}
\usepackage{mathtools}
\usepackage[font=small,labelfont={normal, bf}]{caption}
\usepackage{subcaption}
\usepackage{xcolor}

\usepackage[cm]{fullpage}

\usepackage{autonum}
\usepackage{comment}
\makeatletter
\newcommand{\@giventhatstar}[2]{#1\;\middle|\;#2}
\newcommand{\@giventhatnostar}[3][]{#1#2\;#1|\;#3#1}
\newcommand{\giventhat}{\@ifstar\@giventhatstar\@giventhatnostar}
\makeatother

\newtheorem{theorem}{Theorem}
\newtheorem{proposition}{Proposition}
\theoremstyle{definition}

\newtheorem{example}{Example}
\newtheorem{remark}{Remark}

\title{Multidimensional fractional material derivative}
\author{Hubert Woszczek\thanks{Faculty of Pure and Applied Mathematics, Wroclaw University of Science and Technology, Wyb. Wyspia\'nskiego 27, 50-370 Wroc{\l}aw, Poland\\\underline{Corresponding author:} \texttt{hubert.woszczek@pwr.edu.pl}}}
\date{\today}

\begin{document}

\maketitle

\begin{abstract}
We analyze a nonlocal operator in space and time, called the multidimensional fractional material derivative. We derive its pointwise representation, which allows us to study its other properties. We define an inverse operator, called fractional material integral, and derive its pointwise representation. Furthermore, we analyze a class of linear partial differential equations, which corresponds to deterministic descriptions of the scaling limits of multidimensional L\'evy walks, in which transport is driven by a multidimensional fractional material derivative with a speed vector integrated with respect to a suitable probability measure and a distributional source term. Using Fourier-Laplace transform techniques and a direct convolution-kernel construction, we prove the existence and uniqueness of exponentially bounded measure solutions for measure data. Moreover, we identify a necessary and sufficient condition on the source term for conservation of unit mass and provide separate sufficient conditions for non-negativity and weak convergence to $\delta_0$.
\end{abstract}

\textbf{Keywords:} multidimensional fractional material derivative, L\'evy walk, anomalous diffusion, pointwise representation, \\

\textbf{MSC Codes:} 35R11, 60G51

\section{Introduction}
Nonlocal operators have attracted a lot of attention from the mathematical community, in both pure and applied mathematics. Among them, fractional-order derivatives, which have several different definitions, are among the most popular. In this paper, we focus on the multidimensional fractional derivative, which generalizes the concept of the material derivative, well known in the context of fluid dynamics \cite{batchelor2000introduction}.

The concept of fractional material derivative arises from the study of continuous-time random walk (CTRW) introduced by Montroll and Weiss in \cite{montroll1965random}. Formally, a CTRW process is defined as $X(t) = \sum_{i=1}^{N(t)} J_i$, where $(T_i, J_i)_{i \ge 1}$ are independent and identically distributed (IID) vectors of waiting times ($T_i$) and jump vectors ($J_i$). The number of jumps up to time $t$ is governed by the renewal process $N(t) = \max\{n : T_1 + T_2 + \dots + T_n \le t\}$. As a result, the value of $X(t)$ describes the position of a walker that starts at the origin and then undergoes instantaneous jumps $(J_i)_{i \ge 1}$ that are separated by waiting times $(T_i)_{i \ge 1}$ during which the walker does not change position. In a standard uncoupled CTRW, jumps and waiting times are independent. However, to transition this model into Lévy walk (LW) processes, we must abandon this independence and enforce a strict coupling between jump lengths and waiting times and assume that waiting times have an infinite mean, scaling as $\mathbb{P}(T_i > t) \sim t^{-\alpha}$ for $0 < \alpha < 1$. The jumps are defined as $J_i = V_i T_i$, where $V_i$ is a random vector distributed on the $d-1$-dimensional unit hypersphere $\mathbb{S}^{d-1}$ according to the measure $\Lambda$. $V_i$ represents the direction of the $i$-th jump. We can distinguish the three LW processes \cite{MagdziarzTeuerle2015}      
        \begin{align}
        {L}_{WF}(t) = \sum_{i=1}^{N(t)} J_i,\quad         L_{JF}(t) = \sum_{i=1}^{N(t)+1} J_i, \quad L(t) = \sum_{i=1}^{N(t)} J_i + V_{N(t)+1} \left(t - \sum_{i=1}^{N(t)} T_i\right)
        \end{align}
called wait-first LW, jump-first LW and continuous (or standard) LW, respectively. Note that while $L(t)$ has continuous trajectories, both $L_{WF}(t)$ and $L_{JF}(t)$ feature discontinuous càdlàg trajectories (right-continuous with left limits). To derive macroscopic fractional equations, discontinuous trajectories must be handled in the Skorokhod space $\mathcal{D}[0, \infty)$ endowed with the topology $J_1$ \cite{billingsley1999convergence}. As $n \to \infty$, the partial sums of the jumps and waiting times jointly converge to a $(d+1)$-dimensional stochastic process \cite{MagdziarzTeuerle2015}
$$
\left( {n^{-1/\alpha}} \sum_{i=1}^{[nt]} J_i , {n^{-1/\alpha}} \sum_{i=1}^{[nt]} T_i \right) \xrightarrow{J_1} (L_\alpha(t), S_\alpha(t)),
$$
where $L_\alpha(t)$ is an $\alpha$-stable process with the Fourier transform $\mathbb{E}[e^{i\xi\cdot L_\alpha(t)}] = \exp\{-t\int_{\mathbb{S}^{d-1}}|\xi\cdot b|^\alpha C^* (1-i\operatorname{sgn}(\xi\cdot b)\tan(\pi\alpha/2))\Lambda(db)\}$, where $C^* = \Gamma(2-\alpha)\cos(\pi\alpha/2)/(\Gamma(1-\alpha)(2-\alpha))$. $S_\alpha(t)$ is an $\alpha$-stable subordinator (a strictly increasing Lévy process) with the Laplace transform $\mathbb{E}[e^{-sS_\alpha(t)}] = e^{-ts^\alpha}$. Since the original jumps and waiting times were identical in magnitude ($|J_i| = T_i$), $L_\alpha(t)$ and $S_\alpha(t)$ are strongly dependent.  The scaling limits of the three LW variants are defined using subordination (evaluating the jump process $L_\alpha$ at an operational time dictated by the inverse subordinator $S_\alpha^{-1}(t) = \inf\{\tau : S_\alpha(\tau) > t\}$). For example, the jump-first LW converges simply to \cite{MagdziarzTeuerle2015}
$$
{n^{-1/\alpha}}L_{JF}(n^{1/\alpha}t) \xrightarrow{J_1} L_\alpha(S_\alpha^{-1}(t))
$$
The link from LW to the fractional material derivative is established through the Lévy-Khintchine exponent $\psi(\xi, s)$ of the joint limiting process $(L_\alpha, S_\alpha)$. The joint Fourier-Laplace transform of the law of $(L_\alpha(u),S_\alpha(u))$ is $e^{-u \psi(\xi, s)}$, where the exponent is given by
$$
\psi(\xi, s) = \int_{\mathbb{S}^{d-1}}(s - ib\cdot\xi)^\alpha\Lambda(db). 
$$
This specific algebraic symbol $\psi(\xi, s)$ acts as the Fourier-Laplace representation of a pseudo-differential operator. Specifically, we define the fractional directional derivatives via their transforms
\begin{equation}\label{def:fracMatDer}
\mathcal{FL} \left\{\left(\frac{\partial}{\partial t} + b\cdot\nabla\right)^\alpha u(x,t) \right\}(\xi, s) = (s - ib\cdot\xi)^\alpha \hat{U}(\xi, s).
\end{equation}
This leads us to the definition of the fractional material derivative operator. Moreover, the probability laws of the scaling limits of all three LW variants (with density $u(x,t)$ when it exists) are measure solutions to the pseudo-differential equation
$$
\int_{\mathbb{S}^{d-1}}\left(\frac{\partial}{\partial t} + b\cdot\nabla\right)^\alpha u(x,t)\Lambda(db)= f(x, t)
$$
with the initial condition $u(x,0) = \delta(x)$. The source (measure) $f(x, t)$ accounts for the differences between the wait-first, jump-first, and continuous schemes. Equations with the fractional material derivative in one spatial dimension were studied in \cite{Plociniczak2024, Plociniczak2026Convex}.  

The anomalous diffusion processes can be characterized by the mean squared displacement (MSD) of a particle starting at the origin, which deviates from normal diffusion with linear MSD, namely $\langle x^{2}(t) \rangle \sim t$, and exhibits a power law of the form
\begin{equation}
\langle x^{2}(t) \rangle \sim t^{\beta}, \quad \beta \neq 1,
\end{equation}
see \cite{MetzlerKlafter}. When $0<\beta<1$, we call this process \emph{subdiffusion} and for $1<\beta<2$, \emph{superdiffusion}. {Subdiffusion} is a natural tool for describing diffusion in heterogeneous or disordered environments and has been found in many fields such as porous media \cite{plociniczak2014approximation, plociniczak2015analytical, plociniczak2019derivation, pachepsky2000simulating, El20}, neurological tissues \cite{magin2010fractional}, polymers \cite{muller2011nonlinear}, single particle tracking in biophysics \cite{tabei2013intracellular, Sun17, wong2004anomalous}, plasma physics \cite{Del05}, astrophysics \cite{lawrence1993anomalous}, chemotaxis \cite{langlands2010fractional} and financial mathematics \cite{jacquier2020anomalous}. On the other hand, {Superdiffusion} appears in processes, where the spreading of the particles is faster than in normal diffusion, often due to long-range dependence or heavy-tailed jump distributions. Natural models for superdiffusive processes are L\'evy flights and L\'evy walks, and they appear in many applications such as turbulent transport in fluids and plasmas \cite{shlesinger1993strange, zaburdaev2015levy}, animal foraging patterns \cite{viswanathan1999optimizing}, human travel and mobility \cite{brockmann2006scaling, gonzalez2008understanding}, ecology and biology \cite{humphries2014optimal, reynolds2009levy, alves2016transient}, light transport in random media \cite{mercadier2009levy, barthelemy2008levy}, and financial time series \cite{mantegna1995scaling}.

Differential equations with nonlocal operators also attract increasing interest from a mathematical point of view. They have a rich structure and present additional difficulties in the considered equations. It is not our aim to give a complete overview of the literature on fractional differential equations, but rather to list some important papers from theoretical and numerical studies of such equations. The solvability of the linear anomalous diffusion equation was studied in \cite{Sak11}. Another interesting research direction concerns time-fractional nonlinear diffusion equations, for which decay properties have been studied and different types of solutions have been introduced \cite{allen2016parabolic, wittbold2021bounded, vergara2015optimal, akagi2019fractional, dipierro2019decay,plociniczak2018existence}. Along with interesting analytical challenges that require a careful setting in appropriate functional spaces, the numerical study of nonlocal equations also produces interesting research problems. The numerical difficulties in such studies come from the lack of sufficient smoothness of the solution and the more stringent stability conditions. However, in the last two decades ideas have appeared on how to overcome these difficulties. The interested reader can consult the following papers (and references therein) for finite element approaches \cite{Jin19a, Mus18, plociniczak2022error,plociniczak2024fully, plociniczak2023linear}, spectral schemes \cite{Lin07}, and fast convolution quadrature methods \cite{lopez2025convolution, cuesta2006convolution}. 
Let $L_{loc}^1(\Omega)$ be the space of locally integrable functions on $\Omega$, that is, functions integrable on every compact set $K\subseteq \Omega$. Define the \emph{fractional integral} of order $\alpha>0$ of $y\in L_{loc}^1\left(\mathbb{R}_+\right)$ given by
\begin{equation}
I^{\alpha}y\left(t\right) = \frac{1}{\Gamma\left(\alpha\right)}\int_0^t\left(t-s\right)^{\alpha-1}y\left(s\right)ds.
\end{equation}
Throughout the paper, we use the Fourier transform convention
\begin{equation}
\widehat h(\xi) = \mathcal{F}\{h\}(\xi) := \int_{\mathbb{R}^d}e^{i\xi\cdot x}h(x)\,dx.
\end{equation}
The fractional \emph{Riemann-Liouville derivatives} are defined by
\begin{equation}
D^{\alpha}y\left(t\right) = \dfrac{d^n}{dt^n}I^{n-\alpha}y\left(t\right), \quad n=\lceil\alpha\rceil\normalcolor,
\end{equation}
where $\lceil\alpha\rceil$ is the smallest integer greater than or equal to $\alpha$. In particular, for the physically interesting case with $0<\alpha<1$ we have
\begin{equation}
D^{\alpha}y\left(t\right) = \frac{1}{\Gamma\left(1-\alpha\right)}\dfrac{d}{dt}\int_0^t\left(t-s\right)^{-\alpha}y\left(s\right)ds.
\end{equation}
Finally, we introduce the function spaces that we will work on below. Since the fractional material derivative is normally defined via the Fourier-Laplace multiplier as in \eqref{def:fracMatDer}, it is necessary to define the spaces in which it is well-defined. The space in which the Fourier transform naturally lives is the \emph{Schwartz space} of functions that decay faster than algebraically, more specifically, we define
\begin{equation}
\mathcal{S}\left(\mathbb{R}^d\right) = \left\{y\in C^{\infty}\left(\mathbb{R}^d\right): \text{for all multiindices } \beta,\gamma\in\mathbb{N}_0^d,\ \sup_{x\in\mathbb{R}^d}\left|x^\beta\partial^\gamma y(x)\right|<\infty\right\},
\end{equation}
and by $\mathcal{S}'$ its dual. The Lebesgue space of functions integrable to the power $1\leq p \leq \infty$ is denoted by $L^p(\Omega)$ with $\Omega \subseteq \mathbb{R}^d$ with $d\geq 1$. The space of continuous functions with the continuous $k$-th derivative is $C^{(k)}(\Omega)$ and, similarly, the space of functions that vanish at infinity is denoted by $C_0(\mathbb{R}^d)$. Also, $\mathcal{M}(\mathbb{R}^d)$ denotes the space of finite signed Borel
measures on $\mathbb{R}^d$, endowed with the total variation norm
\begin{equation}
    \|\mu\|_{\mathcal{M}(\mathbb{R}^d)}:=|\mu|(\mathbb{R}^d).
\end{equation}
\section{Multidimensional fractional material derivative and its properties}
Let $x\in \mathbb{R}^d$, $\mathbb{S}^{d-1} = \left\{x\in \mathbb{R}^d: \left\|x\right\|=1\right\}$ be the unit sphere, $b\in\mathbb{S}^{d-1}$. The Fourier-Laplace transform of the multidimensional fractional material derivative $\left(\dfrac{\partial}{\partial t} + b\cdot\nabla\right)^{\alpha}u\left(x,t\right)$ in the direction $b$ is given by
\begin{equation}\label{fltfmd}
    \mathcal{F}\mathcal{L}\left\{\left(\dfrac{\partial}{\partial t} + b\cdot\nabla\right)^{\alpha}u\left(x,t\right)\right\}\left(\xi, s\right) = \left(s - ib\cdot\xi\right)^{\alpha}\mathcal{F}\mathcal{L}\left\{u\left(x,t\right)\right\}\left(\xi, s\right).
\end{equation}
In the next theorem, we will prove the pointwise representation of such an operator.
\begin{theorem}[Pointwise representation]
    Let $u\left(\cdot, t\right) \in \mathcal{S}\left(\mathbb{R}^d\right)$ for all $t\in\mathbb{R}_+$ and satisfy the uniform exponential bound
\begin{equation}\label{uniformexpbound_pointwise}
\|u(\cdot,t)\|_{\infty}\leq M e^{at},
\quad t\geq 0,
\end{equation}
for some constants $M>0$ and $a\in\mathbb{R}$. Then, for $0<\alpha<1$ we have
\begin{equation}\label{pointwiserepeq}
	\left(\dfrac{\partial}{\partial t} + b\cdot\nabla\right)^{\alpha}u\left(x,t\right) = \frac{1}{\Gamma\left(1-\alpha\right)}\left(\dfrac{\partial}{\partial t} + b\cdot\nabla\right) \int_0^t \left(t-s\right)^{-\alpha} u\left(x - b\left(t-s\right), s\right)ds.
\end{equation}
\begin{proof}
    We start from the following factorization of Fourier-Laplace transform of fractional material derivative:
    \begin{equation}
        \left(s - ib\cdot\xi\right)^{\alpha}\mathcal{F}\mathcal{L}\left\{u\left(x,t\right)\right\}\left(\xi, s\right) = \left(s - ib\cdot\xi\right)\left(s - ib\cdot\xi\right)^{\alpha-1}\mathcal{F}\mathcal{L}\left\{u\left(x,t\right)\right\}\left(\xi, s\right).
    \end{equation}
    It is a well-known fact that
    \begin{equation}
        \mathcal{L}^{-1}\left\{\left(s-a\right)^{\alpha-1}\right\}\left(t\right) = e^{at}\frac{t^{-\alpha}}{\Gamma\left(1-\alpha\right)}, \quad \Re\left(s\right)>\Re\left(a\right).
    \end{equation}
    From this and the fact that inverse Laplace transform of multiplication is a convolution of inverse Laplace transforms, we have
    \begin{equation}
        \hat{I}\left(\mathbf{\xi}, t\right):=\mathcal{L}^{-1}\left\{\left(s - ib\cdot\xi\right)^{\alpha-1}\mathcal{F}\mathcal{L}\left\{u\left(x,t\right)\right\}\left(\xi, s\right)\right\}\left(t\right) = \frac{1}{\Gamma\left(1-\alpha\right)}\int_0^t\left(t-\tau\right)^{-\alpha}e^{i\left(t-\tau\right)b\cdot\xi}\hat{u}\left(\xi, \tau\right)d\tau.
    \end{equation}
    Let us recall that for a function $q$ that admits a Fourier transform, we have
    \begin{equation}
        \mathcal{F}\left\{q\left(x - rb\right)\right\}\left(\mathbf{\xi}\right) = e^{irb\cdot\xi}\hat{q}\left(\mathbf{\xi}\right).
    \end{equation}
    Thus, we have
    \begin{equation}
        e^{i\left(t-\tau\right)b\cdot\xi}\hat{u}\left(\xi, \tau\right) = \mathcal{F}\left\{u\left(x - b\left(t-\tau\right), \tau\right)\right\}\left(\xi\right).
    \end{equation}
    Hence,
    \begin{equation}
        \hat{I}\left(\xi, t\right) = \mathcal{F}\left\{\frac{1}{\Gamma\left(1-\alpha\right)}\int_0^t\left(t-\tau\right)^{-\alpha}u\left(x - b\left(t-\tau\right), \tau\right)d\tau\right\}.
    \end{equation}
    From the above considerations, we have
    \begin{equation}
        \mathcal{F}\mathcal{L}\left\{I\left(x,t\right)\right\}\left(\xi, s\right)=\left(s - ib\cdot\xi\right)^{\alpha-1}\mathcal{F}\mathcal{L}\left\{u\left(x,t\right)\right\}\left(\xi, s\right).
    \end{equation}
    Multiplying the above equation by $\left(s - ib\cdot\xi\right)$ and bearing in mind that it is the multiplier of the operator $\left(\dfrac{\partial}{\partial t} + b\cdot\nabla\right)$, with $I(x,0)=0$, we obtain
    \begin{equation}
        \left(s - ib\cdot\xi\right)^{\alpha}\mathcal{F}\mathcal{L}\left\{u\left(x,t\right)\right\}\left(\xi, s\right) = \left(s - ib\cdot\xi\right)\mathcal{F}\mathcal{L}\left\{I\left(x,t\right)\right\}\left(\xi, s\right) = \mathcal{F}\mathcal{L}\left\{\left(\dfrac{\partial}{\partial t} + b\cdot\nabla\right)I\left(x,t\right)\right\}\left(\xi, s\right). 
    \end{equation}
    From the definition of fractional material derivative via Fourier-Laplace transform, we have
    \begin{equation}
        \mathcal{F}\mathcal{L}\left\{\left(\dfrac{\partial}{\partial t} + b\cdot\nabla\right)^{\alpha}u\left(x,t\right)\right\}\left(\xi, s\right)=\mathcal{F}\mathcal{L}\left\{\left(\dfrac{\partial}{\partial t} + b\cdot\nabla\right)I\left(x,t\right)\right\}\left(\xi, s\right),
    \end{equation}
    which completes the proof.
\end{proof}
\begin{remark}
     Notice that formula \eqref{pointwiserepeq} does not require decay at spatial infinity. More generally, the integral is well-defined whenever, for each $(x,t)$, the map $s\mapsto (t-s)^{-\alpha}u(x-b(t-s),s)$ belongs to $L^1((0,t))$. If the resulting material integral is not classically differentiable, the operator in \eqref{pointwiserepeq} can be understood in the distributional sense.
\end{remark}
\end{theorem}
Next, we will prove some basic properties of the fractional material derivative. We start from continuity with respect to $\alpha$ and action on the traveling wave. We assume that $0<\alpha<1$.
\begin{proposition}
    \begin{enumerate}
    \item
    Let $u\in C^1(\mathbb{R}^d\times[0,\infty))$ and assume that the expressions below are well-defined. Then
    \begin{equation}
        \lim_{\alpha\to 0^+}\left(\dfrac{\partial}{\partial t} + b\cdot\nabla\right)^{\alpha}u\left(x,t\right) = u\left(x,t\right), \quad \lim_{\alpha\to 1^-}\left(\dfrac{\partial}{\partial t} + b\cdot\nabla\right)^{\alpha}u\left(x,t\right) = \left(\dfrac{\partial}{\partial t} + b\cdot\nabla\right)u\left(x,t\right).
    \end{equation}
    \item
    Let $u\left(x,t\right) = U\left(x-bt\right)$, then
    \begin{equation}
        \left(\dfrac{\partial}{\partial t} + b\cdot\nabla\right)^{\alpha}u\left(x,t\right) = \frac{t^{-\alpha}}{\Gamma\left(1-\alpha\right)}U\left(x-bt\right).
    \end{equation}
    \item
    Define $u_{\lambda}\left(x, t\right) := u\left(\lambda x, \lambda t\right)$, $\lambda>0$. Then,
\begin{equation}
	\left(\left(\dfrac{\partial}{\partial t} + b\cdot\nabla_x\right)^{\alpha} u_{\lambda}\right)\left(x, t\right) = \lambda^{\alpha}\left(\left(\dfrac{\partial}{\partial t} + b\cdot\nabla_x\right)^{\alpha}u\right)\left(\lambda x, \lambda t\right).
\end{equation}
    \begin{proof}
    \begin{enumerate}
    \item
    It follows from the properties of the Riemann-Liouville fractional derivative.
    \item
        It is sufficient to observe
        \begin{equation}
            u\left(x-b\left(t-s\right), s\right) = U\left(x - b\left(t-s\right)-bs\right) = U\left(x-bt\right).
        \end{equation}
    \item
    Using pointwise representation \eqref{pointwiserepeq}, from direct calculations we have 
\begin{equation}
\begin{aligned}
\left(\left(\frac{\partial}{\partial t}
+ b\cdot\nabla_x\right)^{\alpha} u_{\lambda}\right)(x,t)
&=
\frac{1}{\Gamma(1-\alpha)}
\left(\frac{\partial}{\partial t}
+ b\cdot\nabla_x\right) \\
&\quad
\int_0^t (t-s)^{-\alpha}
u_{\lambda}\bigl(x-b(t-s),s\bigr)\,ds \\
&=
\frac{1}{\Gamma(1-\alpha)}
\left(\frac{\partial}{\partial t}
+ b\cdot\nabla_x\right) \\
&\quad
\int_0^t (t-s)^{-\alpha}
u\bigl(\lambda(x-b(t-s)),\lambda s\bigr)\,ds .
\end{aligned}
\end{equation}
Now, by the change of variable $v = \lambda s$, we have
\begin{equation}
\begin{aligned}
\left(\left(\dfrac{\partial}{\partial t}
+ b\cdot\nabla_x\right)^{\alpha} u_{\lambda}\right)(x,t)
&=
\frac{\lambda^{\alpha-1}}{\Gamma(1-\alpha)}
\left(\dfrac{\partial}{\partial t}
+ b\cdot\nabla_x\right) \\
&\quad
\int_0^{\lambda t}(\lambda t-v)^{-\alpha}
u\bigl(\lambda x-b(\lambda t-v),v\bigr)\,dv \\
&=
\frac{\lambda^{\alpha}}{\Gamma(1-\alpha)}
\left(\dfrac{\partial}{\partial (\lambda t)}
+ b\cdot\nabla_{\lambda x}\right) \\
&\quad
\int_0^{\lambda t}(\lambda t-v)^{-\alpha}
u\bigl(\lambda x-b(\lambda t-v),v\bigr)\,dv \\
&=
\lambda^{\alpha}
\left(\left(\dfrac{\partial}{\partial t}
+ b\cdot\nabla_x\right)^{\alpha}u\right)
(\lambda x,\lambda t).
\end{aligned}
\end{equation}
which completes the proof.
    \end{enumerate}
    \end{proof}
\end{enumerate}
\end{proposition}
\begin{proposition}
    Let $f(\cdot,t) \in L^1_{loc}\left(\mathbb{R}^d\right)$ for all $t\in\mathbb{R}_+$ and $f(x,\cdot) \in L^1_{loc}\left(\mathbb{R}_+\right)$ for all $x\in\mathbb{R}^d$. Then, the unique solution of the problem
    \begin{equation}\label{simplepde}
    \begin{cases}
        \left(\dfrac{\partial}{\partial t} + b\cdot\nabla\right)^{\alpha}u\left(x,t\right) = f\left(x, t\right), \quad x\in\mathbb{R}^d, \, t>0,\\
        \lim_{t\to 0^+}I_b^{1-\alpha}u\left(x,t\right)=\phi\left(x \right),
    \end{cases}
    \end{equation}
    is given by
    \begin{equation}\label{exactsol}
        u\left(x,t\right) = \frac{t^{\alpha-1}}{\Gamma\left(\alpha\right)}\phi\left(x-bt\right) + \frac{1}{\Gamma\left(\alpha\right)}\int_0^t\left(t-s\right)^{\alpha-1}f\left(x-b\left(t-s\right),s\right)ds.
    \end{equation}
    \begin{proof}
        We will use the classical method of characteristics. Fix a constant $C\in\mathbb{R}^d$ and consider the characteristic $x=bt+C$. Set $V(t):=u(bt+C,t)$. Then
    \begin{equation}
    I^{1-\alpha}V(t)
    =\frac{1}{\Gamma(1-\alpha)}\int_0^t(t-s)^{-\alpha}u(C+bs,s)\,ds
    =I_b^{1-\alpha}u(bt+C,t).
    \end{equation}
    Therefore, by \eqref{pointwiserepeq}, problem \eqref{simplepde} reduces along each characteristic to
    \begin{equation}
    D_t^\alpha V(t)=f(bt+C,t),\quad
    \lim_{t\to0^+}I_t^{1-\alpha}V(t)=\phi(C).
    \end{equation}
    Applying the fractional integral $I_t^\alpha$ and using the standard Riemann-Liouville inversion formula (see \cite{KilbasSrivastavaTrujillo2006}, formula (2.1.40)), we obtain
    \begin{equation}
    V(t)=\frac{t^{\alpha-1}}{\Gamma(\alpha)}\phi(C)
    +\frac{1}{\Gamma(\alpha)}\int_0^t(t-s)^{\alpha-1}f(bs+C,s)\,ds.
    \end{equation}
    Finally, substituting $C=x-bt$ gives \eqref{exactsol}.
    \end{proof}
\end{proposition}
Next, we will prove the conservation law.
\begin{proposition}
    Let $\Omega \subset \mathbb{R}^d$ be a bounded Lipschitz domain and $u\left(x,t\right)$ be the solution of \eqref{simplepde}, fulfilling $u\left(\cdot, t\right)\in C^1\left(\Omega\right)$ for all $0\leq t\leq T$   Then, we have
    \begin{equation}
        \dfrac{d}{dt}\int_{\Omega}I\left(x, t\right)dx + \int_{\partial\Omega}\left(n\cdot b\right)I\left(x,t\right)dS = \int_{\Omega}f\left(x, t\right)dx,
    \end{equation}
    where $\partial\Omega$ is the boundary of $\Omega$ and $n$ is the outward normal vector to $\partial\Omega$ and \\ $I\left(x,t\right) = \frac{1}{\Gamma\left(1-\alpha\right)} \int_0^t \left(t-s\right)^{-\alpha} u\left(x - b\left(t-s\right), s\right)ds$.
    \begin{proof}
        Using pointwise representation \eqref{pointwiserepeq} to the PDE \eqref{simplepde}, we have
        \begin{equation}
            \dfrac{d}{dt}I\left(x, t\right) + b\cdot\nabla I\left(x,t\right) = f\left(x, t\right).
        \end{equation}
        Integrating over $\Omega$, we have
        \begin{equation}
            \dfrac{d}{dt}\int_{\Omega}I\left(x, t\right)dx + \int_{\Omega}b\cdot\nabla I\left(x,t\right)dx = \int_{\Omega}f\left(x, t\right)dx.
        \end{equation}
        Applying the divergence theorem to the equation above, we obtain the thesis.
    \end{proof}
\end{proposition}
Now, we will derive a pointwise representation of it in the same spirit as for the fractional material derivative. We will call such an operator the \emph{fractional material integral}.
\begin{proposition}
    Let $u\left(\cdot, t\right) \in \mathcal{S}\left(\mathbb{R}^d\right)$ for all $t\in\mathbb{R}_+$ and satisfy the uniform exponential bound
\begin{equation}\label{uniformexpbound_integral}
\|u(\cdot,t)\|_{\infty}\leq M e^{at},
\quad t\geq 0,
\end{equation}
for some constants $M>0$ and $a\in\mathbb{R}$. Then, for $0<\alpha<1$ we have
\begin{equation}
	I_b^{\alpha}u\left(x,t\right) = \frac{1}{\Gamma\left(\alpha\right)}\int_0^t\left(t-s\right)^{\alpha-1}u\left(x-b\left(t-s\right),s\right)ds.
\end{equation}
\begin{proof}
    It is a well-known fact that
    \begin{equation}
        \mathcal{L}^{-1}\left\{\left(s-a\right)^{-\alpha}\right\}\left(t\right) = e^{at}\frac{t^{\alpha-1}}{\Gamma\left(\alpha\right)}, \quad \Re\left(s\right)>\Re\left(a\right).
    \end{equation}
    From this and the fact that inverse Laplace transform of multiplication is a convolution of inverse Laplace transforms, we have
    \begin{equation}
        \hat{I}^{\alpha}\left(\mathbf{\xi}, t\right):=\mathcal{L}^{-1}\left\{\left(s - ib\cdot\xi\right)^{-\alpha}\mathcal{F}\mathcal{L}\left\{u\left(x,t\right)\right\}\left(\xi, s\right)\right\}\left(t\right) = \frac{1}{\Gamma\left(\alpha\right)}\int_0^t\left(t-\tau\right)^{\alpha-1}e^{i\left(t-\tau\right)b\cdot\xi}\hat{u}\left(\xi, \tau\right)d\tau.
    \end{equation}
    Let us recall that for a function $q$ that admits a Fourier transform, we have
    \begin{equation}
        \mathcal{F}\left\{q\left(x - rb\right)\right\}\left(\mathbf{\xi}\right) = e^{irb\cdot\xi}\hat{q}\left(\mathbf{\xi}\right).
    \end{equation}
    Thus, we have
    \begin{equation}
        e^{i\left(t-\tau\right)b\cdot\xi}\hat{u}\left(\xi, \tau\right) = \mathcal{F}\left\{u\left(x - b\left(t-\tau\right), \tau\right)\right\}\left(\xi\right).
    \end{equation}
    Hence,
    \begin{equation}
        \hat{I}^{\alpha}\left(\xi, t\right) = \mathcal{F}\left\{\frac{1}{\Gamma\left(\alpha\right)}\int_0^t\left(t-\tau\right)^{\alpha-1}u\left(x - b\left(t-\tau\right), \tau\right)d\tau\right\},
    \end{equation}
    which completes the proof.
\end{proof}
\end{proposition}

\section{Weighted combination of multidimensional fractional material derivatives}
In this section, we want to consider a more general situation, where a vector of directions $b$ is distributed according to some probability measure $\Lambda$. In this case, the operator is defined as
\begin{equation}
    \left(\dfrac{\partial}{\partial t} + b\cdot\nabla\right)_{\Lambda}^{\alpha}u\left(x,t\right) := \int_{\mathbb{S}^{d-1}}\left(\dfrac{\partial}{\partial t} + b\cdot\nabla\right)^{\alpha}u\left(x,t\right)\Lambda\left(db\right).
\end{equation}
The goal of this section is to analyze the equation
\begin{equation}
\begin{cases}\label{combinationeq}
        \left(\dfrac{\partial}{\partial t} + b\cdot\nabla\right)_{\Lambda}^{\alpha}u\left(x,t\right) = f\left(x, t\right), \quad x\in\mathbb{R}^d, \, t>0,\\
        \lim_{t\to0^+}
\int_{\mathbb{S}^{d-1}}
I_b^{1-\alpha}u(\cdot,t)
\,\Lambda(db)
=g,  
    \end{cases}
\end{equation}
for suitable data $f$ and $g$.
\begin{theorem}\label{exuniq}
Let $0<\alpha<1$, and let $\Lambda$ be a probability measure on
$\mathbb{S}^{d-1}$. Let
\begin{equation}
g\in\mathcal{M}(\mathbb{R}^d),
\end{equation}
and let
\begin{equation}
f:(0,\infty)\to\mathcal{M}(\mathbb{R}^d),
\quad
t\mapsto f(\cdot,t),
\end{equation}
be weak-$*$ measurable. Assume that there exist constants $M>0$,
$a\in\mathbb{R}$ and $0\leq\beta<1$ such that
\begin{equation}\label{uniformexpbound}
\|f(\cdot,t)\|_{\mathcal{M}(\mathbb{R}^d)}
\leq
M t^{-\beta}e^{at},
\quad t>0.
\end{equation}
For $b\in\mathbb{S}^{d-1}$, define the fractional material integral $I_b^{1-\alpha}$ in the weak sense by
\begin{equation}\label{fractionalmaterialintegral}
\left\langle
I_b^{1-\alpha}u(\cdot,t),
\varphi
\right\rangle
=
\frac{1}{\Gamma(1-\alpha)}
\int_0^t
(t-\tau)^{-\alpha}
\left\langle
u(\cdot,\tau),
\varphi(\cdot+b(t-\tau))
\right\rangle
\,d\tau,
\end{equation}
for every $\varphi\in C_0(\mathbb{R}^d)$, whenever the right-hand side is finite. More precisely, equation \eqref{combinationeq} is understood in the distributional sense with respect to the spatial variable; that is,
for every $\varphi\in\mathcal{S}(\mathbb{R}^d)$,
\begin{equation}\label{weakcombinationeq}
\int_{\mathbb{S}^{d-1}}
\left\langle
\left(
\frac{\partial}{\partial t}
+
b\cdot\nabla
\right)^\alpha
u(\cdot,t),
\varphi
\right\rangle
\Lambda(db)
=
\left\langle
f(\cdot,t),
\varphi
\right\rangle
\end{equation}
in the sense of distributions in $t>0$. The initial condition in  \eqref{combinationeq} means that, for every
$\varphi\in C_0(\mathbb{R}^d)$,
\begin{equation}\label{weakRLinitialcondition}
\lim_{t\to0^+}
\int_{\mathbb{S}^{d-1}}
\left\langle
I_b^{1-\alpha}u(\cdot,t),
\varphi
\right\rangle
\Lambda(db)
=
\langle g,\varphi\rangle.
\end{equation}
Then problem \eqref{combinationeq} admits a measure-valued mild solution given by
\begin{equation}\label{duhamel}
u(\cdot,t)
=
G_t*g
+
\int_0^t
G_{t-\tau}*f(\cdot,\tau)
\,d\tau,
\quad t>0,
\end{equation}
where the time integral is understood in the weak-$*$ sense in
$\mathcal{M}(\mathbb{R}^d)$. This solution is unique up to equality for almost every $t>0$ in the class of measure-valued solutions whose Laplace transforms exist for sufficiently large $\Re s$; in particular, this includes exponentially bounded solutions. If the solutions are weak-$*$ continuous on $(0,\infty)$, uniqueness holds for every $t>0$. For finite signed Borel measures $\mu,\nu\in\mathcal{M}(\mathbb{R}^d)$, their convolution is defined by
\begin{equation}\label{measureconvolution}
\langle\mu*\nu,\varphi\rangle
=
\int_{\mathbb{R}^d}
\int_{\mathbb{R}^d}
\varphi(x+y)
\,\mu(dx)\nu(dy),
\quad
\varphi\in C_0(\mathbb{R}^d).
\end{equation}
Here, for every $t>0$, $G_t$ is a finite positive Borel measure on $\mathbb{R}^d$, and the family $(G_t)_{t>0}$ is characterized by
\begin{equation}\label{kerneltransform}
\int_0^\infty
e^{-st}\widehat{G_t}(\xi)
\,dt
=
\frac{1}{
\displaystyle
\int_{\mathbb{S}^{d-1}}
(s-ib\cdot\xi)^\alpha
\Lambda(db)},
\quad
\Re s>0,
\end{equation}
where
\begin{equation}
\widehat{G_t}(\xi)
:=
\int_{\mathbb{R}^d}
e^{i\xi\cdot x}
\,G_t(dx).
\end{equation}
Moreover,
\begin{equation}\label{kernelmass}
\|G_t\|_{\mathcal{M}(\mathbb{R}^d)}
=
G_t(\mathbb{R}^d)
=
\frac{t^{\alpha-1}}{\Gamma(\alpha)},
\quad t>0.
\end{equation}
In general, $G_t$ admits a subordinator representation. Let
$(\mu_r)_{r>0}$ be the weakly continuous convolution semigroup of probability measures on $\mathbb{R}_+\times\mathbb{R}^d$ characterized by its joint Fourier-Laplace transform
\begin{equation}
\int_0^\infty
\int_{\mathbb{R}^d}
e^{-st+i\xi\cdot x}
\,\mu_r(dt,dx)
=
\exp\left(
-r
\int_{\mathbb{S}^{d-1}}
(s-ib\cdot\xi)^\alpha
\Lambda(db)
\right).
\end{equation}
By the disintegration theorem,
\begin{equation}
\mu_r(dt,dx)
=
h_\alpha(t,r)\nu_{r,t}(dx)\,dt,
\end{equation}
where $\nu_{r,t}$ is a regular conditional probability measure on $\mathbb{R}^d$ supported in $\overline{B_t(0)}$, and $h_\alpha(\cdot,r)$ is the density of the $\alpha$-stable subordinator, characterized by
\begin{equation}\label{stabledensity}
\int_0^\infty
e^{-\lambda y}
h_\alpha(y,r)\,dy
=
e^{-r\lambda^\alpha},
\quad
\Re\lambda>0.
\end{equation}
The kernel $G_t$ is then given by
\begin{equation}\label{stablekernel_multi}
G_t(dx)
=
\mathbf{1}_{\{|x|\leq t\}}
\int_0^\infty
h_\alpha(t,r)\nu_{r,t}(dx)
\,dr.
\end{equation}
For the endpoint case where
$\Lambda=\delta_{b_0}$ for some direction
$b_0\in\mathbb{S}^{d-1}$, we have
\begin{equation}\label{endpointkernels}
G_t
=
\frac{t^{\alpha-1}}{\Gamma(\alpha)}
\delta_{tb_0}.
\end{equation}

\begin{proof}
Let
\begin{equation}
\zeta(\xi,s)
:=
\int_{\mathbb{S}^{d-1}}
(s-ib\cdot\xi)^\alpha
\Lambda(db),
\end{equation}
where the principal branch of the complex power is used. We first
observe that $\zeta(\xi,s)$ does not vanish when $\Re s>0$. Indeed,
if $\Re s>0$, then for every $b\in\mathbb{S}^{d-1}$,
$s-ib\cdot\xi$ belongs to the open right half-plane. Consequently,
\begin{equation}
-\frac{\pi}{2}
<
\arg(s-ib\cdot\xi)
<
\frac{\pi}{2},
\end{equation}
and hence
\begin{equation}
-\frac{\alpha\pi}{2}
<
\arg(s-ib\cdot\xi)^\alpha
<
\frac{\alpha\pi}{2}.
\end{equation}
Since $0<\alpha<1$, it follows that
\begin{equation}
\Re(s-ib\cdot\xi)^\alpha>0.
\end{equation}
Therefore,
\begin{equation}\label{zetapositive}
\Re\zeta(\xi,s)
=
\int_{\mathbb{S}^{d-1}}
\Re(s-ib\cdot\xi)^\alpha
\Lambda(db)
>0,
\end{equation}
and thus
\begin{equation}
\zeta(\xi,s)\neq0,
\quad
\Re s>0.
\end{equation}
We now construct the kernel. Using the subordinator representation
\eqref{stablekernel_multi}, we obtain, for $\Re s>0$,
\begin{equation}
\begin{split}
&
\int_0^\infty
e^{-st}
\int_{\mathbb{R}^d}
e^{i\xi\cdot x}
G_t(dx)
\,dt
\\
&\quad
=
\int_0^\infty
\int_0^\infty
\int_{\mathbb{R}^d}
e^{-st+i\xi\cdot x}
h_\alpha(t,r)
\nu_{r,t}(dx)
\,dt\,dr
\\
&\quad
=
\int_0^\infty
\exp\left(
-r
\int_{\mathbb{S}^{d-1}}
(s-ib\cdot\xi)^\alpha
\Lambda(db)
\right)
\,dr
\\
&\quad
=
\int_0^\infty
e^{-r\zeta(\xi,s)}
\,dr
=
\frac{1}{\zeta(\xi,s)}.
\label{kerneltransformproof}
\end{split}
\end{equation}
The last integral converges by \eqref{zetapositive}. This proves \eqref{kerneltransform} for general $\Lambda$. Next, for the endpoint case
$\Lambda=\delta_{b_0}$, define
\begin{equation}
G_t
=
\frac{t^{\alpha-1}}{\Gamma(\alpha)}
\delta_{tb_0}.
\end{equation}
Then
\begin{equation}
\begin{split}
\int_0^\infty
e^{-st}
\widehat{G_t}(\xi)
\,dt
&=
\frac{1}{\Gamma(\alpha)}
\int_0^\infty
e^{-(s-ib_0\cdot\xi)t}
t^{\alpha-1}
\,dt
\\
&=
(s-ib_0\cdot\xi)^{-\alpha}
=
\frac{1}{\zeta(\xi,s)}.
\end{split}
\end{equation}
Thus, \eqref{kerneltransform} holds for any probability measure $\Lambda$. Setting $\xi=0$ in \eqref{kerneltransform}, we find
\begin{equation}
\int_0^\infty
e^{-st}
G_t(\mathbb{R}^d)
\,dt
=
\frac{1}{s^\alpha}.
\end{equation}
Since
\begin{equation}
\mathcal{L}^{-1}\{s^{-\alpha}\}(t)
=
\frac{t^{\alpha-1}}{\Gamma(\alpha)},
\end{equation}
and $G_t$ is a positive measure, we conclude that
\begin{equation}
\|G_t\|_{\mathcal{M}(\mathbb{R}^d)}
=
G_t(\mathbb{R}^d)
=
\frac{t^{\alpha-1}}{\Gamma(\alpha)}.
\end{equation}
For every $\mu\in\mathcal{M}(\mathbb{R}^d)$,
convolution with $G_t$ satisfies
\begin{equation}\label{convolutionestimate}
\|G_t*\mu\|_{\mathcal{M}(\mathbb{R}^d)}
\leq
\|G_t\|_{\mathcal{M}(\mathbb{R}^d)}
\|\mu\|_{\mathcal{M}(\mathbb{R}^d)}
=
\frac{t^{\alpha-1}}{\Gamma(\alpha)}
\|\mu\|_{\mathcal{M}(\mathbb{R}^d)}.
\end{equation}
Consequently, the right-hand side of \eqref{duhamel} is well-defined as a weak-$*$ integral in $\mathcal{M}(\mathbb{R}^d)$. More precisely, for every $\varphi\in C_0(\mathbb{R}^d)$,
\begin{equation}\label{weakduhamel}
\left\langle
\int_0^t
G_{t-\tau}*f(\cdot,\tau)
\,d\tau,
\varphi
\right\rangle
=
\int_0^t
\left\langle
G_{t-\tau}*f(\cdot,\tau),
\varphi
\right\rangle
\,d\tau.
\end{equation}
Indeed,
\begin{equation}
\begin{split}
\|u(\cdot,t)\|_{\mathcal{M}(\mathbb{R}^d)}
&\leq
\frac{t^{\alpha-1}}{\Gamma(\alpha)}
\|g\|_{\mathcal{M}(\mathbb{R}^d)}
\\
&\quad
+
\frac{1}{\Gamma(\alpha)}
\int_0^t
(t-\tau)^{\alpha-1}
\|f(\cdot,\tau)\|_{\mathcal{M}(\mathbb{R}^d)}
\,d\tau.
\label{solutionestimate}
\end{split}
\end{equation}
The integral is finite because, by \eqref{uniformexpbound},
\begin{equation}
\begin{split}
&
\int_0^t
(t-\tau)^{\alpha-1}
\|f(\cdot,\tau)\|_{\mathcal{M}(\mathbb{R}^d)}
\,d\tau
\\
&\quad
\leq
M
\int_0^t
(t-\tau)^{\alpha-1}
\tau^{-\beta}e^{a\tau}
\,d\tau
<\infty,
\end{split}
\end{equation}
since $0<\alpha<1$ and $0\leq\beta<1$. We next verify the Riemann-Liouville initial condition. Write
\begin{equation}
u(\cdot,t)
=
u_g(\cdot,t)+u_f(\cdot,t),
\end{equation}
where
\begin{equation}
u_g(\cdot,t)
=
G_t*g
\end{equation}
and
\begin{equation}
u_f(\cdot,t)
=
\int_0^t
G_{t-\tau}*f(\cdot,\tau)
\,d\tau.
\end{equation}
For $t>0$, define a positive Borel measure $K_t$ by
\begin{equation}\label{Ktdefinition}
\begin{split}
\langle K_t,\varphi\rangle
:=
\frac{1}{\Gamma(1-\alpha)}
\int_{\mathbb{S}^{d-1}}
\int_0^t
(t-\tau)^{-\alpha}
\int_{\mathbb{R}^d}
\varphi(x+b(t-\tau))
\,G_\tau(dx)
\,d\tau
\,\Lambda(db),
\end{split}
\end{equation}
for $\varphi\in C_0(\mathbb{R}^d)$. By \eqref{kernelmass},
\begin{equation}
\begin{split}
K_t(\mathbb{R}^d)
&=
\frac{1}{
\Gamma(1-\alpha)\Gamma(\alpha)}
\int_0^t
(t-\tau)^{-\alpha}
\tau^{\alpha-1}
\,d\tau
\\
&=
\frac{
B(\alpha,1-\alpha)}
{\Gamma(\alpha)\Gamma(1-\alpha)}
=
1.
\end{split}
\end{equation}
Furthermore, since $G_\tau$ is supported in
$\overline{B_\tau(0)}$, the measure $K_t$ is supported in
$\overline{B_t(0)}$. Hence $(K_t)_{t>0}$ is an approximate identity. Using \eqref{fractionalmaterialintegral}, we obtain
\begin{equation}
\int_{\mathbb{S}^{d-1}}
I_b^{1-\alpha}u_g(\cdot,t)
\,\Lambda(db)
=
K_t*g.
\end{equation}
Therefore, for every $\varphi\in C_0(\mathbb{R}^d)$,
\begin{equation}
\begin{split}
&
\left|
\left\langle
K_t*g-g,\varphi
\right\rangle
\right|
\\
&\quad
\leq
\|g\|_{\mathcal{M}(\mathbb{R}^d)}
\sup_{\substack{x\in\mathbb{R}^d\\ |y|\leq t}}
|\varphi(x+y)-\varphi(x)|.
\end{split}
\end{equation}
Since every function in $C_0(\mathbb{R}^d)$ is uniformly continuous,
the right-hand side converges to zero as $t\to0^+$. Hence
\begin{equation}\label{homogeneousinitialtrace}
\lim_{t\to0^+}
\int_{\mathbb{S}^{d-1}}
I_b^{1-\alpha}u_g(\cdot,t)
\,\Lambda(db)
=
g
\end{equation}
in the weak-$*$ topology of $\mathcal{M}(\mathbb{R}^d)$. It remains to show that the source term has zero fractional initial trace. From \eqref{uniformexpbound} and
\eqref{convolutionestimate}, for $0<t\leq1$, there exists a constant $C>0$ such that
\begin{equation}
\begin{split}
\|u_f(\cdot,t)\|_{\mathcal{M}(\mathbb{R}^d)}
&\leq
\frac{M}{\Gamma(\alpha)}
\int_0^t
(t-\tau)^{\alpha-1}
\tau^{-\beta}e^{a\tau}
\,d\tau
\\
&\leq
C t^{\alpha-\beta}.
\end{split}
\end{equation}
Consequently,
\begin{equation}
\begin{split}
&
\left\|
\int_{\mathbb{S}^{d-1}}
I_b^{1-\alpha}u_f(\cdot,t)
\,\Lambda(db)
\right\|_{\mathcal{M}(\mathbb{R}^d)}
\\
&\quad
\leq
\frac{1}{\Gamma(1-\alpha)}
\int_0^t
(t-\tau)^{-\alpha}
\|u_f(\cdot,\tau)\|_{\mathcal{M}(\mathbb{R}^d)}
\,d\tau
\\
&\quad
\leq
\frac{C}{\Gamma(1-\alpha)}
\int_0^t
(t-\tau)^{-\alpha}
\tau^{\alpha-\beta}
\,d\tau
\\
&\quad
=
\frac{C
B(1-\alpha,1+\alpha-\beta)}
{\Gamma(1-\alpha)}
t^{1-\beta}.
\end{split}
\end{equation}
Since $\beta<1$, the last expression converges to zero as
$t\to0^+$. Thus,
\begin{equation}\label{forcinginitialtrace}
\lim_{t\to0^+}
\int_{\mathbb{S}^{d-1}}
I_b^{1-\alpha}u_f(\cdot,t)
\,\Lambda(db)
=
0
\end{equation}
in $\mathcal{M}(\mathbb{R}^d)$. Combining \eqref{homogeneousinitialtrace} and
\eqref{forcinginitialtrace}, we obtain
\begin{equation}
\lim_{t\to0^+}
\int_{\mathbb{S}^{d-1}}
I_b^{1-\alpha}u(\cdot,t)
\,\Lambda(db)
=
g
\end{equation}
in the weak-$*$ topology of
$\mathcal{M}(\mathbb{R}^d)$. Hence the Riemann-Liouville initial condition is satisfied. We next verify that $u$ satisfies the equation. By
\eqref{uniformexpbound}, for every $\sigma>a$,
\begin{equation}
\begin{split}
\int_0^\infty
e^{-\sigma t}
\|f(\cdot,t)\|_{\mathcal{M}(\mathbb{R}^d)}
\,dt
&\leq
M
\int_0^\infty
e^{-(\sigma-a)t}
t^{-\beta}
\,dt
\\
&=
M\Gamma(1-\beta)
(\sigma-a)^{\beta-1}
<\infty.
\end{split}
\end{equation}
Therefore, the measure-valued Laplace transform
\begin{equation}
\widetilde{f}(s)
:=
\int_0^\infty
e^{-st}
f(\cdot,t)
\,dt
\end{equation}
exists in the weak-$*$ sense for $\Re s>a$. The estimate
\eqref{solutionestimate} also implies that
$\widetilde{u}(s)$ exists for
\begin{equation}
\Re s>\max\{0,a\}.
\end{equation}
Every finite Borel measure on $\mathbb{R}^d$ defines a tempered distribution. Therefore, the spatial Fourier transform may be understood in $\mathcal{S}'(\mathbb{R}^d)$. For a finite Borel measure $\mu$, we write
\begin{equation}
\widehat{\mu}(\xi)
:=
\int_{\mathbb{R}^d}
e^{i\xi\cdot x}
\,\mu(dx).
\end{equation}
Taking the Laplace transform of \eqref{duhamel} and then the Fourier transform in space, and using \eqref{kerneltransform}, gives
\begin{equation}\label{transformedsolution}
\widehat{\widetilde{u}}(\xi,s)
=
\frac{
\widehat{\widetilde{f}}(\xi,s)
+
\widehat{g}(\xi)}
{\displaystyle
\int_{\mathbb{S}^{d-1}}
(s-ib\cdot\xi)^\alpha
\Lambda(db)}.
\end{equation}
Equivalently,
\begin{equation}\label{transformedweakproblem}
\left[
\int_{\mathbb{S}^{d-1}}
(s-ib\cdot\xi)^\alpha
\Lambda(db)
\right]
\widehat{\widetilde{u}}(\xi,s)
=
\widehat{\widetilde{f}}(\xi,s)
+
\widehat{g}(\xi).
\end{equation}
To see that \eqref{transformedweakproblem} is precisely the
Fourier-Laplace formulation of \eqref{combinationeq}-\eqref{weakRLinitialcondition}, observe that
\begin{equation}
\mathcal{F}\mathcal{L}
\left\{
I_b^{1-\alpha}u
\right\}
(\xi,s)
=
(s-ib\cdot\xi)^{\alpha-1}
\widehat{\widetilde{u}}(\xi,s).
\end{equation}
Therefore,
\begin{equation}
\begin{split}
&
\mathcal{F}\mathcal{L}
\left\{
\left(
\frac{\partial}{\partial t}
+
b\cdot\nabla
\right)^\alpha
u
\right\}
(\xi,s)
\\
&\quad
=
(s-ib\cdot\xi)^\alpha
\widehat{\widetilde{u}}(\xi,s)
-
\widehat{
I_b^{1-\alpha}u(\cdot,0+)
}(\xi).
\end{split}
\end{equation}
Integrating with respect to $\Lambda(db)$ and using the Riemann-Liouville initial condition, we obtain
\begin{equation}
\begin{split}
&
\mathcal{F}\mathcal{L}
\left\{
\int_{\mathbb{S}^{d-1}}
\left(
\frac{\partial}{\partial t}
+
b\cdot\nabla
\right)^\alpha
u
\,\Lambda(db)
\right\}
(\xi,s)
\\
&\quad
=
\left[
\int_{\mathbb{S}^{d-1}}
(s-ib\cdot\xi)^\alpha
\Lambda(db)
\right]
\widehat{\widetilde{u}}(\xi,s)
-
\widehat{g}(\xi).
\end{split}
\end{equation}
Hence \eqref{transformedweakproblem} is equivalent to
\begin{equation}
\mathcal{F}\mathcal{L}
\left\{
\int_{\mathbb{S}^{d-1}}
\left(
\frac{\partial}{\partial t}
+
b\cdot\nabla
\right)^\alpha
u
\,\Lambda(db)
\right\}
(\xi,s)
=
\widehat{\widetilde{f}}(\xi,s),
\end{equation}
which proves that \eqref{combinationeq} holds in the distributional
sense with respect to the spatial variable. It remains to prove uniqueness. Let $u_1$ and $u_2$ be two
measure-valued mild solutions satisfying
\eqref{combinationeq} with the same data,
and assume that their Laplace transforms exist for $\Re s$
sufficiently large. Set
\begin{equation}
w=u_1-u_2.
\end{equation}
Then $w$ satisfies the homogeneous problem with zero initial trace.
Taking its Fourier-Laplace transform gives
\begin{equation}
\zeta(\xi,s)
\widehat{\widetilde{w}}(\xi,s)
=
0.
\end{equation}
Since $\zeta(\xi,s)\neq0$ for $\Re s>0$, we obtain
\begin{equation}
\widehat{\widetilde{w}}(\xi,s)
=
0.
\end{equation}
Since the Fourier transform is injective on finite Borel measures,
\begin{equation}
\widetilde{w}(s)=0.
\end{equation}
Pairing with an arbitrary
$\varphi\in C_0(\mathbb{R}^d)$ gives
\begin{equation}
\int_0^\infty
e^{-st}
\langle w(\cdot,t),\varphi\rangle
\,dt
=
0.
\end{equation}
Uniqueness of the scalar Laplace transform therefore yields
\begin{equation}
\langle w(\cdot,t),\varphi\rangle
=
0
\end{equation}
for almost every $t>0$. Hence
\begin{equation}
w(\cdot,t)=0
\end{equation}
as a measure for almost every $t>0$. Thus the measure-valued mild
solution is unique, up to equality for almost every $t>0$.
If the solutions are weak-$*$ continuous on $(0,\infty)$, then the
uniqueness holds for every $t>0$.
\end{proof}
\end{theorem}

\noindent Since we are interested in connections between the fractional material derivative and stochastic processes, in the next proposition we find a necessary and sufficient condition on the source $f$ for the solution to be a probability measure for all times.
\begin{proposition}\label{probconsprop}
Consider problem \eqref{combinationeq} with the homogeneous
Riemann-Liouville initial datum $g=0$. Let $G_t$ be the positive
convolution kernel introduced in Theorem~\ref{exuniq}. Let
\begin{equation}
f:(0,\infty)\to\mathcal{M}(\mathbb{R}^d),
\quad
t\mapsto f(\cdot,t),
\end{equation}
be weak-$*$ measurable, and define
\begin{equation}\label{duhamel2}
u(\cdot,t)
=
\int_0^t
G_{t-\tau}*f(\cdot,\tau)\,d\tau,
\quad t>0,
\end{equation}
where the integral is understood in the weak-$*$ sense in
$\mathcal{M}(\mathbb{R}^d)$. Assume that
\begin{equation}\label{weightedfintegrability}
\int_0^t
(t-\tau)^{\alpha-1}
\|f(\cdot,\tau)\|_{\mathcal{M}(\mathbb{R}^d)}
\,d\tau
<\infty,
\quad t>0.
\end{equation}
Set
\begin{equation}
m(t)
:=
f(\cdot,t)(\mathbb{R}^d).
\end{equation}
Then
\begin{equation}\label{massidentity}
u(\cdot,t)(\mathbb{R}^d)
=
I_t^\alpha m(t)
=
\frac{1}{\Gamma(\alpha)}
\int_0^t
(t-\tau)^{\alpha-1}
m(\tau)\,d\tau.
\end{equation}
Consequently,
\begin{equation}\label{densitycond}
u(\cdot,t)(\mathbb{R}^d)=1,
\quad t>0,
\end{equation}
if and only if
\begin{equation}\label{pdfcond}
f(\cdot,t)(\mathbb{R}^d)
=
\frac{t^{-\alpha}}{\Gamma(1-\alpha)}
\end{equation}
for almost every $t>0$. If, in addition,
\begin{equation}\label{sourcepositivity}
f(\cdot,t)
\text{ is a positive Borel measure for almost every }t>0,
\end{equation}
then $u(\cdot,t)$ is a positive Borel measure for every $t>0$. Therefore, under \eqref{pdfcond} and \eqref{sourcepositivity},
$u(\cdot,t)$ is a probability measure on $\mathbb{R}^d$ for every $t>0$. Furthermore, under assumptions \eqref{pdfcond} and \eqref{sourcepositivity}, define the normalized source measures by
\begin{equation}\label{normalizedsource}
\nu_t(dx)
:=
\Gamma(1-\alpha)t^\alpha f(dx,t).
\end{equation}
After modifying $f(\cdot,t)$ on a set of times of Lebesgue measure zero, if necessary, the measures $\nu_t$ may be taken to be probability measures for every $t>0$. If
\begin{equation}\label{sourceconcentration}
\nu_t\rightharpoonup\delta_0
\quad\text{as }t\to0^+,
\end{equation}
then
\begin{equation}\label{solutionconcentration}
u(\cdot,t)\rightharpoonup\delta_0
\quad\text{as }t\to0^+.
\end{equation}
The weak convergence in \eqref{solutionconcentration} is an additional
probabilistic condition and is distinct from the homogeneous
Riemann-Liouville initial condition imposed in problem
\eqref{combinationeq}.
\end{proposition}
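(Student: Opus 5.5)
The plan is to test \eqref{duhamel2} against the constant function $1$ and reduce every claim to a scalar statement. Since $1\notin C_0(\mathbb{R}^d)$, I would first extend the weak-$*$ identity \eqref{weakduhamel} to bounded continuous functions. For $\varphi\in C_b(\mathbb{R}^d)$, the pairing $\langle G_{t-\tau}*f(\cdot,\tau),\varphi\rangle$ is a double integral against the product measure. It is dominated by $\|\varphi\|_\infty\frac{(t-\tau)^{\alpha-1}}{\Gamma(\alpha)}\|f(\cdot,\tau)\|_{\mathcal M}$ via \eqref{kernelmass}, and this bound is integrable by \eqref{weightedfintegrability}. I would approximate $1$ by cutoffs $\chi_R\in C_c$ with $0\le\chi_R\le1$ and $\chi_R\uparrow1$, and pass to the limit by dominated convergence, both inside each pairing and in the $\tau$-integral. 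This gives
\[
u(\cdot,t)(\mathbb{R}^d)=\int_0^t G_{t-\tau}(\mathbb{R}^d)\,f(\cdot,\tau)(\mathbb{R}^d)\,d\tau ,
\]
using $\widehat{\mu*\nu}(0)=\mu(\mathbb{R}^d)\nu(\mathbb{R}^d)$. Inserting \eqref{kernelmass} yields \eqref{massidentity}. I would also check that $m$ is measurable, which follows from weak-$*$ measurability as a monotone limit of $\langle f(\cdot,t),\chi_R\rangle$. Taking care of this measurability and of the extension beyond $C_0$ is the main technical step.

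The equivalence is then a statement about Abel's equation $I^\alpha m=1$. If \eqref{pdfcond} holds, then $I^\alpha\bigl(t^{-\alpha}/\Gamma(1-\alpha)\bigr)=B(\alpha,1-\alpha)/(\Gamma(\alpha)\Gamma(1-\alpha))=1$. Conversely, suppose $I^\alpha m\equiv1$. Applying $I^{1-\alpha}$ and the semigroup property gives $I^1m(t)=I^{1-\alpha}1=t^{1-\alpha}/\Gamma(2-\alpha)$. Here $m\in L^1_{loc}$ because $(t-\tau)^{\alpha-1}\ge t^{\alpha-1}$ on $(0,t)$, so \eqref{weightedfintegrability} gives local integrability. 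Differentiating the absolutely continuous function $I^1m$ then gives $m(t)=t^{-\alpha}/\Gamma(1-\alpha)$ for a.e.\ $t$.

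For positivity, I would take $\varphi\ge0$ in $C_0$. The integrand $\iint\varphi(x+y)\,G_{t-\tau}(dx)f(dy,\tau)$ is then nonnegative for a.e.\ $\tau$, since $G$ is positive by Theorem~\ref{exuniq}, so $\langle u(\cdot,t),\varphi\rangle\ge0$ and $u(\cdot,t)$ is positive by the Riesz theorem. Combined with the mass identity, $u(\cdot,t)$ is a probability measure. Under \eqref{pdfcond} and \eqref{sourcepositivity}, $\nu_t$ has mass $1$ and is positive off a null set of times. Redefining $f(\cdot,t):=\frac{t^{-\alpha}}{\Gamma(1-\alpha)}\delta_0$ on that set changes neither the weak-$*$ integral nor measurability.

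For the concentration claim, it suffices by tightness and total mass $1$ to show $\langle u(\cdot,t),\varphi\rangle\to\varphi(0)$ for $\varphi\in C_b$ uniformly continuous, for instance bounded Lipschitz functions, which characterize weak convergence. Writing $f(\cdot,\tau)=\frac{\tau^{-\alpha}}{\Gamma(1-\alpha)}\nu_\tau$, we get
\[
\langle u(\cdot,t),\varphi\rangle-\varphi(0)=\frac{1}{\Gamma(\alpha)\Gamma(1-\alpha)}\int_0^t(t-\tau)^{\alpha-1}\tau^{-\alpha}\iint\bigl(\varphi(x+y)-\varphi(0)\bigr)\,\bar G_{t-\tau}(dx)\,\nu_\tau(dy)\,d\tau ,
\]
where $\bar G_s$ is the normalized kernel, a probability measure supported in $\overline{B_s(0)}$. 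For $|x|\le t$, the inner integrand differs from $\int(\varphi(y)-\varphi(0))\,\nu_\tau(dy)$ by at most $\omega_\varphi(t)$, the modulus of continuity. The latter tends to $0$ uniformly in $\tau\le t$ as $t\to0$, because $\nu_\tau\rightharpoonup\delta_0$. Since the Beta weights integrate to $1$, the whole expression tends to $0$. Handling this uniformity in $\tau\le t$ correctly is the delicate point of the final step.
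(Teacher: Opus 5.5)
Your proposal is correct. For the mass identity, the converse of the equivalence and positivity, it follows the paper's proof.

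\begin{itemize}
\item Applying $I^{1-\alpha}$ and differentiating $I^1m$ is just an unpacking of the paper's $D_t^\alpha I_t^\alpha m=m$ and $D_t^\alpha 1=t^{-\alpha}/\Gamma(1-\alpha)$.
\item Your cutoff argument supplies the extension of the weak-$*$ identity from $C_0(\mathbb{R}^d)$ to $C_b(\mathbb{R}^d)$. The paper uses this extension tacitly when it tests against $1$ and against $\psi\in C_b(\mathbb{R}^d)$.
\item One small correction: for signed $f$ the sequence $\langle f(\cdot,t),\chi_R\rangle$ need not be monotone in $R$. A pointwise limit is still enough for measurability of $m$.
\end{itemize}

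The genuine difference is the concentration step.

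\emph{The paper's route.} It substitutes $\tau=ts$ and writes $u(\cdot,t)$ as the Beta-weighted mixture of $K_{t(1-s)}*\nu_{ts}$. For each fixed $s$ it invokes continuity of convolution under weak convergence, and then concludes by dominated convergence in $s$. This handles every $\psi\in C_b(\mathbb{R}^d)$ directly.

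\emph{Your route.} You restrict to bounded uniformly continuous $\varphi$, which the portmanteau theorem allows (tightness is not actually needed). You then obtain the explicit bound
\[
\bigl|\langle u(\cdot,t),\varphi\rangle-\varphi(0)\bigr|\le\omega_\varphi(t)+\sup_{0<\tau\le t}\Bigl|\int_{\mathbb{R}^d}\varphi\,d\nu_\tau-\varphi(0)\Bigr|.
\]
This avoids both the continuity-of-convolution lemma and the change of variables, and it gives a rate.

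The step you flag as delicate is in fact immediate. The supremum on the right tends to $0$ exactly because $\int\varphi\,d\nu_\tau\to\varphi(0)$ as $\tau\to0^+$; that is the definition of the one-sided limit. Your choice $\nu_t=\delta_0$ on the exceptional null set does not disturb this.
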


\begin{proof}
By \eqref{weightedfintegrability} and the estimate
\begin{equation}
\|G_t\|_{\mathcal{M}(\mathbb{R}^d)}
=
G_t(\mathbb{R}^d)
=
\frac{t^{\alpha-1}}{\Gamma(\alpha)},
\end{equation}
the weak-$*$ integral in \eqref{duhamel2} is well-defined as a finite signed
Borel measure. Indeed,
\begin{equation}
\begin{split}
\|u(\cdot,t)\|_{\mathcal{M}(\mathbb{R}^d)}
&\leq
\int_0^t
\|G_{t-\tau}*f(\cdot,\tau)\|_{\mathcal{M}(\mathbb{R}^d)}
\,d\tau
\\
&\leq
\frac{1}{\Gamma(\alpha)}
\int_0^t
(t-\tau)^{\alpha-1}
\|f(\cdot,\tau)\|_{\mathcal{M}(\mathbb{R}^d)}
\,d\tau
<\infty.
\end{split}
\end{equation}
For finite Borel measures $\mu$ and $\nu$, the total mass of their convolution satisfies
\begin{equation}
(\mu*\nu)(\mathbb{R}^d)
=
\mu(\mathbb{R}^d)\nu(\mathbb{R}^d).
\end{equation}
Therefore, using \eqref{duhamel2},
\eqref{weightedfintegrability}, and Fubini's theorem, we obtain
\begin{equation}
\begin{split}
u(\cdot,t)(\mathbb{R}^d)
&=
\int_0^t
\left(
G_{t-\tau}*f(\cdot,\tau)
\right)(\mathbb{R}^d)
\,d\tau
\\
&=
\int_0^t
G_{t-\tau}(\mathbb{R}^d)
f(\cdot,\tau)(\mathbb{R}^d)
\,d\tau
\\
&=
\frac{1}{\Gamma(\alpha)}
\int_0^t
(t-\tau)^{\alpha-1}
m(\tau)\,d\tau
\\
&=
I_t^\alpha m(t).
\label{massidentityproof}
\end{split}
\end{equation}
This proves \eqref{massidentity}. Assume first that \eqref{pdfcond} holds. Then
\begin{equation}
\begin{split}
u(\cdot,t)(\mathbb{R}^d)
&=
\frac{1}{
\Gamma(\alpha)\Gamma(1-\alpha)}
\int_0^t
(t-\tau)^{\alpha-1}
\tau^{-\alpha}
\,d\tau
\\
&=
\frac{
B(\alpha,1-\alpha)}
{\Gamma(\alpha)\Gamma(1-\alpha)}
=
1.
\end{split}
\end{equation}
Hence \eqref{densitycond} holds. Conversely, suppose that
\begin{equation}
u(\cdot,t)(\mathbb{R}^d)=1,
\quad t>0.
\end{equation}
Then \eqref{massidentityproof} gives
\begin{equation}
I_t^\alpha m(t)=1.
\end{equation}
The function $m$ is locally integrable. Indeed,
\begin{equation}
|m(t)|
\leq
\|f(\cdot,t)\|_{\mathcal{M}(\mathbb{R}^d)},
\end{equation}
and the weighted integrability assumption
\eqref{weightedfintegrability} implies local integrability in time. Applying the Riemann-Liouville derivative $D_t^\alpha$ and using
\begin{equation}
D_t^\alpha I_t^\alpha m(t)=m(t)
\end{equation}
for almost every $t>0$, together with
\begin{equation}
D_t^\alpha 1
=
\frac{t^{-\alpha}}{\Gamma(1-\alpha)},
\end{equation}
we obtain
\begin{equation}
m(t)
=
\frac{t^{-\alpha}}{\Gamma(1-\alpha)}
\end{equation}
for almost every $t>0$. This proves \eqref{pdfcond}. Suppose now that \eqref{sourcepositivity} holds. Since $G_t$ is a positive Borel measure, for almost every $\tau>0$ the convolution
\begin{equation}
G_{t-\tau}*f(\cdot,\tau)
\end{equation}
is a positive Borel measure. Therefore its weak-$*$ time integral is also a positive Borel measure, and hence
\begin{equation}
u(\cdot,t)\geq0
\end{equation}
in the sense of measures. More explicitly, for every nonnegative
$\varphi\in C_0(\mathbb{R}^d)$,
\begin{equation}
\begin{split}
\langle u(\cdot,t),\varphi\rangle
&=
\int_0^t
\left\langle
G_{t-\tau}*f(\cdot,\tau),
\varphi
\right\rangle
\,d\tau
\\
&\geq0.
\end{split}
\end{equation}
Together with \eqref{densitycond}, this shows that
$u(\cdot,t)$ is a probability measure for every $t>0$. It remains to prove the weak-convergence statement. Define
\begin{equation}\label{normalizedkernel}
K_r
:=
\Gamma(\alpha)r^{1-\alpha}G_r,
\quad r>0.
\end{equation}
By the mass identity for $G_r$,
\begin{equation}
K_r(\mathbb{R}^d)=1,
\end{equation}
so $K_r$ is a probability measure. Moreover,
\begin{equation}
\operatorname{supp}K_r
\subseteq
\overline{B_r(0)}.
\end{equation}
Consequently,
\begin{equation}\label{kernelweakconvergence}
K_r\rightharpoonup\delta_0
\quad\text{as }r\to0^+.
\end{equation}
Under \eqref{pdfcond} and \eqref{sourcepositivity},
\begin{equation}
\nu_t(dx)
=
\Gamma(1-\alpha)t^\alpha f(dx,t)
\end{equation}
is a probability measure. Equivalently,
\begin{equation}
f(dx,t)
=
\frac{t^{-\alpha}}{\Gamma(1-\alpha)}
\nu_t(dx).
\end{equation}
Similarly, by \eqref{normalizedkernel},
\begin{equation}
G_r
=
\frac{r^{\alpha-1}}{\Gamma(\alpha)}
K_r.
\end{equation}
Using these identities in \eqref{duhamel2} and making the change of variables $\tau=ts$, we obtain the following identity of probability
measures:
\begin{equation}\label{mixturerepresentation}
u(\cdot,t)
=
\int_0^1
\frac{
(1-s)^{\alpha-1}s^{-\alpha}}
{\Gamma(\alpha)\Gamma(1-\alpha)}
\left(
K_{t(1-s)}*\nu_{ts}
\right)
\,ds.
\end{equation}
The weight in \eqref{mixturerepresentation} integrates to one, since
\begin{equation}
\int_0^1
\frac{
(1-s)^{\alpha-1}s^{-\alpha}}
{\Gamma(\alpha)\Gamma(1-\alpha)}
\,ds
=
\frac{
B(\alpha,1-\alpha)}
{\Gamma(\alpha)\Gamma(1-\alpha)}
=
1.
\end{equation}
Let $\psi\in C_b(\mathbb{R}^d)$. For every fixed
$s\in(0,1)$, as $t\to0^+$,
\begin{equation}
K_{t(1-s)}
\rightharpoonup
\delta_0
\end{equation}
by \eqref{kernelweakconvergence}, while
\begin{equation}
\nu_{ts}
\rightharpoonup
\delta_0
\end{equation}
by \eqref{sourceconcentration}. Since convolution is continuous with respect to weak convergence of probability measures,
\begin{equation}
K_{t(1-s)}*\nu_{ts}
\rightharpoonup
\delta_0.
\end{equation}
Hence
\begin{equation}
\int_{\mathbb{R}^d}
\psi(x)
\left(
K_{t(1-s)}*\nu_{ts}
\right)(dx)
\longrightarrow
\psi(0)
\quad
\text{as }t\to0^+.
\end{equation}
Moreover,
\begin{equation}
\left|
\int_{\mathbb{R}^d}
\psi(x)
\left(
K_{t(1-s)}*\nu_{ts}
\right)(dx)
\right|
\leq
\|\psi\|_\infty,
\end{equation}
because
$K_{t(1-s)}*\nu_{ts}$ is a probability measure. Therefore, by the
dominated convergence theorem applied to
\eqref{mixturerepresentation},
\begin{equation}
\begin{split}
\lim_{t\to0^+}
\langle u(\cdot,t),\psi\rangle
&=
\lim_{t\to0^+}
\int_{\mathbb{R}^d}
\psi(x)\,u(dx,t)
=
\psi(0).
\end{split}
\end{equation}
Thus,
\begin{equation}
u(\cdot,t)
\rightharpoonup
\delta_0
\quad
\text{as }t\to0^+.
\end{equation}
This weak convergence is a statement about the probabilistic
concentration of the probability measures $u(\cdot,t)$ and should not be confused with the homogeneous Riemann-Liouville initial condition
\begin{equation}
\lim_{t\to0^+}
\int_{\mathbb{S}^{d-1}}
I_b^{1-\alpha}u(\cdot,t)
\,\Lambda(db)
=
0,
\end{equation}
which is the initial condition associated with
\eqref{combinationeq} when $g=0$. This completes the proof.
\end{proof}
\section{Examples}
It is interesting that there exist several types of L\'evy walks that yield solutions $u=u(x,t)$ in an exact, analytical form. We start with one-dimensional examples with a fixed velocity $b=1$, equivalently $d=1$ and $\Lambda=\delta_1$.
In the next few examples, we will show the applications of the previous theorem to equations governing L\'evy walks.
\begin{example}
    Consider the scaling limit of wait-first L\'evy walk. Then, its PDF $u\left(x,t\right)$ satisfies \cite{JurlewiczKernMeerschaertScheffler2012}
    \begin{equation}
        \begin{cases}
        \left(\dfrac{\partial}{\partial t} + \dfrac{\partial}{\partial x}\right)^{\alpha}u\left(x,t\right)= \frac{t^{-\alpha}}{\Gamma\left(1-\alpha\right)}\delta(x), \quad x\in\mathbb{R}, \, t>0,\\
        \lim_{t\to0^+}u\left(x,t\right)=\delta(x),
    \end{cases}
    \end{equation}
    where $\delta(x)$ is the Dirac delta. In this case, we have
    \begin{equation}
    \begin{aligned}
        u\left(x,t\right) =   \frac{\sin\left(\pi \alpha\right)}{\pi}\left(x\right)^{\alpha-1}\left(t-x\right)^{-\alpha}H(x)H(t-x)
    \end{aligned}
    \end{equation}
\end{example}
\begin{example}
    Consider the scaling limit of jump-first L\'evy walk.Then, its PDF $u\left(x,t\right)$ satisfies \cite{JurlewiczKernMeerschaertScheffler2012}
    \begin{equation}
        \begin{cases}
        \left(\dfrac{\partial}{\partial t} + \dfrac{\partial}{\partial x}\right)^{\alpha}u\left(x,t\right) = \frac{\alpha}{\Gamma\left(1-\alpha\right)}\int_t^{\infty}\delta(x-u)u^{-\alpha-1}du =\\ \frac{\alpha}{\Gamma\left(1-\alpha\right)}\left(x\right)^{-\alpha-1
        }H\left(x-t\right), \quad x\in\mathbb{R}, \, t>0,\\
        \lim_{t\to0^+}u\left(x,t\right)=\delta(x).
    \end{cases}
    \end{equation}
    In this case we have
    \begin{equation}
        u\left(x,t\right) = \frac{\sin\left(\pi\alpha\right)}{\pi}\frac{1}{x}\left(\frac{t}{x - t}\right)^{\alpha}H\left(x-t\right).
    \end{equation}
\end{example}
\begin{example}
    Consider the scaling limit of standard L\'evy walk. Then, its PDF $u\left(x,t\right)$ satisfies \cite{JurlewiczKernMeerschaertScheffler2012}
    \begin{equation}
        \begin{cases}
        \left(\dfrac{\partial}{\partial t} + \dfrac{\partial}{\partial x}\right)^{\alpha}u\left(x,t\right) = \frac{t^{-\alpha}}{\Gamma\left(1-\alpha\right)}\delta(x-t), \quad x\in\mathbb{R}, \, t>0,\\
        \lim_{t\to0^+}u\left(x,t\right)=\delta(x).
    \end{cases}
    \end{equation}
     In this case, we have the solution
    \begin{equation}
        u\left(x,t\right) = \delta(x-t). 
    \end{equation}
\end{example}
Other explicit formulas were derived in \cite{MagdziarzZorawik2017} using probabilistic methods. In all of these cases, the measure $\Lambda$ is the uniform measure on the sphere $\mathbb{S}^{d-1}$ with marginal distribution given by
\begin{equation}
    \Lambda_1\left(db_1\right) = c_d\left(1-b_1^2\right)^{(d-3)/2}\,db_1,
\end{equation}
where $c_d = \frac{1}{\sqrt{\pi}}\frac{\Gamma\left(d/2\right)}{\Gamma\left(\left(d-1\right)/2\right)}$.

\begin{example}\label{wfex} For the \emph{wait first} L\'evy walk, we have \cite{MagdziarzZorawik2017}
\begin{equation}
\begin{cases}
	\left(\dfrac{\partial}{\partial t} + b\cdot\nabla\right)_{\Lambda}^{\alpha}u\left(x,t\right) = \frac{t^{-\alpha}}{\Gamma\left(1-\alpha\right)}\delta^{\left(d\right)}\left(x\right) \vspace{2pt}\\ 
	\lim_{t \to 0^{+}}u\left(x, t\right) = \delta^{\left(d\right)}\left(x\right).
\end{cases}
\end{equation}
If $d=2n+3$, then the solution has the explicit self-similar form 
\begin{equation}
u\left(x,t\right) = \frac{\Gamma\left(n + 3/2\right)}{2\pi^{n+3/2}t\left|x\right|^{2n+2}}\phi\left(\frac{\left|x\right|}{t}\right),
\end{equation} 
where
\begin{equation}
\phi\left(\sqrt{y}\right) = 
\begin{cases}
	\frac{2\sqrt{\pi}}{\Gamma\left(n+3/2\right)}y^{n+1}\left(-1\right)^{n+1}\dfrac{d^{n+1}}{dy^{n+1}}\phi_1\left(\sqrt{y}\right), \quad y\in\left(0,1\right),\\
    0, \quad \text{otherwise},
\end{cases}
\end{equation}
where 
\begin{equation}
    \phi_1\left(x\right) = -\frac{1}{\pi\left|x\right|}\Im\left(\frac{1}{{}_2F_1(-\alpha/2,\left(1-\alpha\right)/2;3/2+n;1/x^2)
}\right)
\end{equation}
where ${}_2F_1$ is the Gaussian hypergeometric function.
If $d=2n+2$, then the solution has the explicit self-similar form 
\begin{equation}
u\left(x,t\right) = \frac{\Gamma\left(n + 1\right)}{2\pi^{n+1}t\left|x\right|^{2n+1}}\phi\left(\frac{\left|x\right|}{t}\right),
\end{equation} 
where
\begin{equation}
\phi\left(\sqrt{y}\right) = 
\begin{cases}
	\frac{2\sqrt{\pi}}{\Gamma\left(n+1\right)}y^{n+1/2}D_-^{n+1/2}\phi_1\left(\sqrt{y}\right), \quad y\in\left(0,1\right),\\
    0, \quad \text{otherwise},
\end{cases}
\end{equation}
where 
\begin{equation}
    D_-^{n+1/2}f\left(x\right) = \left(-\dfrac{d}{dx}\right)^{n+1}\frac{1}{\pi^{1/2}}\int_x^{\infty}\frac{f\left(y\right)}{\left(y-x\right)^{1/2}}dy
\end{equation}
is the right-sided Riemann-Liouville derivative of order $n+1/2$ and
\begin{equation}
    \phi_1\left(x\right) = -\frac{1}{\pi\left|x\right|}\Im\left(\frac{1}{{}_2F_1(-\alpha/2,\left(1-\alpha\right)/2;1+n;1/x^2)
}\right)
\end{equation}
\end{example}
\begin{example} In the case of the \emph{jump first} L\'evy walk, the main equation has the form \cite{MagdziarzZorawik2017}
\begin{equation}
\begin{split}
\begin{cases}
	\left(\dfrac{\partial}{\partial t} + b\cdot\nabla\right)_{\Lambda}^{\alpha}u\left(x,t\right)=\dfrac{\alpha}{\Gamma\left(1-\alpha\right)} \displaystyle{\int_t^{\infty} \left(\int_{\mathbb{S}^{d-1}}\delta^{\left(d\right)}\left(x-bu\right)\Lambda\left(db\right)\right) u^{-\alpha-1}du}\\ 
	\lim_{t \to 0^{+}}u\left(x, t\right) = \delta\left(x\right).
\end{cases}
\end{split}
\end{equation}
If $d=2n+3$, then the solution has the explicit self-similar form 
\begin{equation}
u\left(x,t\right) = \frac{\Gamma\left(n + 3/2\right)}{2\pi^{n+3/2}t\left|x\right|^{2n+2}}\phi\left(\frac{\left|x\right|}{t}\right),
\end{equation} 
where
\begin{equation}
\phi\left(\sqrt{y}\right) = 
\begin{cases}
	\frac{2\sqrt{\pi}}{\Gamma\left(n+3/2\right)}y^{n+1}\left(-1\right)^{n+1}\dfrac{d^{n+1}}{dy^{n+1}}\phi_1\left(\sqrt{y}\right), \quad y\in\left(0,\infty\right),\\
    0, \quad \text{otherwise},
\end{cases}
\end{equation}
where 
\begin{equation}
    \phi_1\left(x\right) = -\frac{c}{\pi\left|x\right|^{\alpha+1}}\Im\left(\frac{\cos\left(\pi\alpha/2\right) + i\sin\left(\pi\alpha/2\right)}{{}_2F_1(-\alpha/2,\left(1-\alpha\right)/2;3/2+n;1/x^2)
}\right)
\end{equation}
where $c=\frac{\cos\left(\alpha\pi/2\right)\Gamma\left(2-\alpha\right)\Gamma\left(\left(1+\alpha\right)/2\right)\Gamma\left(3/2+n\right)}{\left(1-\alpha\right)\sqrt{\pi}\Gamma\left(1-\alpha\right)\Gamma\left(3/2+\alpha/2+n\right)}$.
If $d=2n+2$, then the solution has the explicit self-similar form 
\begin{equation}
u\left(x,t\right) = \frac{\Gamma\left(n + 1\right)}{2\pi^{n+1}t\left|x\right|^{2n+1}}\phi\left(\frac{\left|x\right|}{t}\right),
\end{equation} 
where
\begin{equation}
\phi\left(\sqrt{y}\right) = 
\begin{cases}
	\frac{2\sqrt{\pi}}{\Gamma\left(n+1\right)}y^{n+1/2}D_-^{n+1/2}\phi_1\left(\sqrt{y}\right), \quad y\in\left(0,1\right),\\
    0, \quad \text{otherwise},
\end{cases}
\end{equation}
where
\begin{equation}
    \phi_1\left(x\right) = -\frac{c}{\pi\left|x\right|^{\alpha+1}}\Im\left(\frac{1}{{}_2F_1(-\alpha/2,\left(1-\alpha\right)/2;1+n;1/x^2)
}\right)
\end{equation}
where $c=\frac{\cos\left(\alpha\pi/2\right)\Gamma\left(2-\alpha\right)\Gamma\left(\left(1+\alpha\right)/2\right)\Gamma\left(1+n\right)}{\left(1-\alpha\right)\sqrt{\pi}\Gamma\left(1-\alpha\right)\Gamma\left(1+\alpha/2+n\right)}$.
\end{example}
\begin{example} Now let us consider standard L\'evy walk. Then, the governing equation is of the form \cite{MagdziarzZorawik2017}
\begin{equation}
\begin{cases}
	\left(\dfrac{\partial}{\partial t} + b\cdot\nabla\right)_{\Lambda}^{\alpha}u\left(x,t\right) = \dfrac{t^{-\alpha}}{\Gamma\left(1-\alpha\right)}\int_{\mathbb{S}^{d-1}}\delta\left(x-bt\right)\Lambda\left(db\right) \vspace{2pt}\\ 
	\lim_{t \to 0^{+}}u\left(x, t\right) = \delta\left(x\right).
\end{cases}
\end{equation}
If $d=2n+3$, then the solution has the explicit self-similar form 
\begin{equation}
u\left(x,t\right) = \frac{\Gamma\left(n + 3/2\right)}{2\pi^{n+3/2}t\left|x\right|^{2n+2}}\phi\left(\frac{\left|x\right|}{t}\right),
\end{equation} 
where
\begin{equation}
\phi\left(\sqrt{y}\right) = 
\begin{cases}
	\frac{2\sqrt{\pi}}{\Gamma\left(n+3/2\right)}y^{n+1}\left(-1\right)^{n+1}\dfrac{d^{n+1}}{dy^{n+1}}\phi_1\left(\sqrt{y}\right), \quad y\in\left(0,1\right),\\
    0, \quad \text{otherwise},
\end{cases}
\end{equation}
where 
\begin{equation}
    \phi_1\left(x\right) = -\frac{1}{\pi\left|x\right|}\Im\left(\frac{{}_2F_1(\left(1-\alpha\right)/2,1-\alpha/2;3/2+n;1/x^2)}{{}_2F_1(-\alpha/2,\left(1-\alpha\right)/2;3/2+n;1/x^2)}\right).
\end{equation}
If $d=2n+2$, then the solution has the explicit self-similar form 
\begin{equation}
u\left(x,t\right) = \frac{\Gamma\left(n + 1\right)}{2\pi^{n+1}t\left|x\right|^{2n+1}}\phi\left(\frac{\left|x\right|}{t}\right),
\end{equation} 
where
\begin{equation}
\phi\left(\sqrt{y}\right) = 
\begin{cases}
	\frac{2\sqrt{\pi}}{\Gamma\left(n+1\right)}y^{n+1/2}D_-^{n+1/2}\phi_1\left(\sqrt{y}\right), \quad y\in\left(0,1\right),\\
    0, \quad \text{otherwise},
\end{cases}
\end{equation}
where
\begin{equation}
    \phi_1\left(x\right) = -\frac{1}{\pi\left|x\right|}\Im\left(\frac{{}_2F_1(\left(1-\alpha\right)/2,1-\alpha/2;1+n;1/x^2)}{{}_2F_1(-\alpha/2,\left(1-\alpha\right)/2;1+n;1/x^2)}\right).
\end{equation}
\end{example}
\section*{Acknowledgments}
The author thanks Łukasz Płociniczak for all the help and discussions during the preparation of this paper and for careful reading of the first version of the manuscript.

\noindent This work has been supported by the National Science Centre, Poland (NCN) under the grant Sonata Bis with a number NCN 2020/38/E/ST1/00153. 

\noindent The author used AI-based writing tools for language editing only. All mathematical content, results, proofs, and conclusions were developed, verified, and approved by the author.
\bibliographystyle{plain}
\bibliography{bibliography.bib}
\end{document}